\newtheorem{theorem}{Theorem}[section]
\newcommand{\w}{\omega}
\newcommand{\Z}{\mathbb{Z}}
\newcommand{\R}{\mathbb{R}}
\newcommand{\CC}{\mathbb{C}}
\begin{document}

{\bf \large
\centerline{N.~K.~Smolentsev, P.~N.~Podkur}

\vspace{3mm}
\centerline{Meyer wavelets with scaling factor $N>2$}
\vspace{3mm}
}
\begin{abstract}
In this paper, Meyer wavelets with an arbitrary integer scaling factor $N>2$ are defined using wavelets with multiple scaling factors $MN>2$. Expressions for frequency functions of wavelets and corresponding filters are obtained.
\end{abstract}

\section{Introduction} \label{Intro}

Wavelet analysis provides powerful tools for solving various problems of mathematical modeling. Wavelet analysis is used especially effectively to study various medical data \cite{Pavl}, \cite{Po-Sm-16}, \cite{Rahman}.
At present, many different wavelets are known and effectively used \cite{Daub}, \cite{Sm-14}. Among them, Meyer wavelets stand out in that they provide the clearest separation of the signal over frequency bands and have rapidly decreasing filters.
The use of powers of two for constructing the theory of wavelets and when using them is convenient in many respects, although it is not obligatory. Instead of the scaling factor 2, you can use any positive integer $N$ and actually a rational number greater than one \cite{Daub}. See \cite{Po-Sm-07} and \cite{Sm-14} for a development of this topic.
Wavelet analysis with a scaling factor $N>2$ has certain advantages, providing the separation of the signal into $N$ frequency bands even with a single wavelet decomposition.
This, for example, is relevant when analyzing the signals of cardiograms obtained using high-resolution electrocardiographs (15-20 kHz).
Examples of wavelet analysis of stock market data with scaling factors of 4, 5, and 8 are presented in \cite{Sm-14}. The work \cite{Po-Sm-20} shows the possibility of using wavelet analysis with a scaling factor of 3 to study EEG signals.

In this paper, Meyer wavelets with an arbitrary integer scaling factor $N>2$ are constructed. A general definition of Meyer wavelets with an arbitrary integer scaling factor $N>2$ is given.
The presented construction works well for odd $N>2$.
However, in the case of even N, one of the wavelet filters, namely the wavelet filter $\psi^{N-1}(x)$, has an infinite impulse response and a weak decay at infinity. To solve this problem, multiple scaling factors $MN$ are used in the work.
It is shown that if wavelets with scaling factors M and N are given, then wavelets with scaling factor $MN$ can be determined. It is also shown that if the given M- and N-wavelets have the Meyer function $\varphi(x)$ as a scaling function, then the result is $MN$-wavelets with the same Meyer scaling function $\varphi(x)$.
Considering that for $N=2$ the Meyer wavelets $\varphi(x)$ and $\psi(x)$ are well known and have rapidly decaying filters, this solves the above problem with wavelet filters for an even scaling factor and completely solves the problem of defining Meyer wavelets with an arbitrary integer scaling factor $N>2$. At the end of the paper, we consider an example for $M=3$ and $N=2$.

\section{Preliminaries} \label{Preface}
Let us recall the basic concepts of wavelet analysis with a integer scaling factor $N\ge 2$ (for more details, see \cite{Sm-14}). A function $\varphi(x)\in L^2(\R)$ is called scaling-function with a scale factor $N$ if it satisfies the relation
\begin{equation}\label{eq-1}
\varphi(x)=\sqrt{N}\sum_{n\in \Z}h_n^0 \varphi(Nx-n),
\end{equation}
where the set of real numbers $\{h_n^0\}$ is called scaling filter of the function $\varphi(x)$.
For the scaling function $\varphi(x)$, frequency function $H^0(\w)$ is determined by the formula:
\begin{equation}\label{eq-2}
H^0(\w)=\frac {1}{\sqrt{N}}\sum_{n\in \Z}h_n^0 e^{-in\w}.		
\end{equation}

After the Fourier transform, the scaling relation (\ref{eq-1}) takes the form
\begin{equation}\label{eq-3}
\widehat{\varphi}(\w)=H^0\left(\frac{\w}{N}\right)\widehat{\varphi}\left(\frac{\w}{N}\right).		
\end{equation}

For the scaling function $\varphi(x)$, there are $N-1$ wavelet functions $\psi^1(x), \dots , \psi^{N-1}(x)$ defined by the equalities
\begin{equation}\label{eq-4}
\psi^{k}(x)=\sqrt{N}\sum_{n\in \Z}h_n^k \varphi(Nx-n), \quad  k=1,2,\dots,N-1.		
\end{equation}
where the coefficients $\{h_n^k\}_{n\in \Z}$ are called wavelet filters.
In the frequency domain, the last relations (\ref{eq-4}) take the form:
\begin{equation}\label{eq-5}
\widehat{\psi^k}(\w)=H^k\left(\frac{\w}{N}\right)\widehat{\varphi}\left(\frac{\w}{N}\right), \quad  k=1,2,\dots,N-1,		
\end{equation}
where $H^k(\w)$ are frequency functions corresponding to wavelets $\psi^k(x)$:
\begin{equation}\label{eq-6}
H^k(\w)=\frac {1}{\sqrt{N}}\sum_{n\in \Z}h_n^k e^{-in\w}, \quad  k=1,2,\dots,N-1.		
\end{equation}

The wavelets are called orthogonal if the functions
$\psi_{j,n}^k(x)=\sqrt{N^j}\psi^k(N^j x-n)$, $n\in \Z$, $k = 1,2, \dots, N-1$ form a complete orthonormal system of functions in $L^2(\R)$.
For orthogonality, the frequency functions of wavelets must satisfy the unitarity property of matrix  \cite{Daub}, \cite{Sm-14}:
\begin{equation}\label{eq-7}
\left(\begin{matrix}
H^0(\w)&H^0\left(\w-\frac{2\pi}{N}\right)&\dots&H^0\left(\w-\frac{2\pi(N-1)}{N}\right)\\
H^1(\w)&H^1\left(\w-\frac{2\pi}{N}\right)&\dots&H^1\left(\w-\frac{2\pi(N-1)}{N}\right)\\
\dots&\dots&\dots&\dots\\
H^{N-1}(\w)&H^{N-1}\left(\w-\frac{2\pi}{N}\right)&\dots&H^{N-1}\left(\w-\frac{2\pi(N-1)}{N}\right)\\
\end{matrix}\right)
\end{equation}

In practice, in wavelet analysis, we are dealing with a digital signal, which is represented by an array $X = \{x_n\}$.
Then its wavelet decomposition is a decomposition into $N$ arrays: $X\to \{A^0, A^1, A^2, \dots, A^{N-1}\}$ and is produced according to the formulas \cite{Sm-14}:
$$
a_m^0=\sum_{n\in \Z}\overline{h_n^0}x_{n+Nm}, \quad a_m^k=\sum_{n\in \Z}\overline{h_n^k}x_{n+Nm}, \quad k = 1,2, \dots, N-1
$$
where the overline means complex conjugation.
The array $A^0=\{a_m^0\}$ is called the approximation coefficients of the signal $X=\{x_n\}$, and the remaining arrays $A^k=\{a_m^k\}$ are called the refinement coefficients.
The decomposition procedure can be repeated by applying it to the array of coefficients $A^0$.
The signal $X=\{x_n\}$ is reconstructed from the wavelet expansion coefficients as follows:
$$
x_n=\sum_{m\in \Z}h_{n-Nm}^0 a^0_{m}+ \sum_{k=1}^{N-1}\sum_{m\in \Z} h_{n-Nm}^k a^k_{m}.
$$

\section{Meyer wavelets} \label{Meyer_wavelets}
The Meyer scaling function $\varphi(x)$ for the scaling factor $N=2$ is defined \cite{Daub}, \cite{Sm-14} by specifying its Fourier transform by the equality
\begin{equation}\label{eq-8}
\widehat{\varphi}(\w)=
\left\{
  \begin{matrix}
    1, &\w\in [-\frac{2\pi}{3},\frac{2\pi}{3}], \\
    \cos\left(\frac \pi2 \nu\left(\frac{3}{2\pi}|\w|-1\right)\right), & \frac{2\pi}{3} \le|\w|\le \frac{4\pi}{3}, \\
    0, & \rm{for\ others}\ \w ,\\
  \end{matrix}
\right .
\end{equation}
where $\nu(x)$ is an auxiliary function that satisfies three conditions:
$\nu(x)= 0$ for $x\le 0$, $\nu(x)=1$ for $x\ge 0$, and $\nu(x)+\nu(1-x)=1$.
A common choice of the function $\nu(x)$ is the following polynomial interpolation
$\nu(x)= x^4(35- 84x+70x^2 -20x^3)$ between $0$ and $1$ on the interval $[0,1]$.

\begin{theorem} \label{Th1}
The Meyer scaling function $\varphi(x)$ is an $N$-scaling function for any integer number $N>2$.
\end{theorem}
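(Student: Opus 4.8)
The plan is to verify the statement in the Fourier domain by exhibiting a $2\pi$-periodic frequency function $H^0(\w)$ that satisfies the two-scale relation (\ref{eq-3}), i.e.\ $\widehat{\varphi}(\w)=H^0(\w/N)\widehat{\varphi}(\w/N)$ for every $\w\in\R$. Indeed, once such an $H^0$ is produced, its Fourier coefficients (rescaled by $\sqrt{N}$) are precisely the filter $\{h_n^0\}$ of (\ref{eq-2}), and taking the inverse Fourier transform of (\ref{eq-3}) recovers the scaling relation (\ref{eq-1}); this is exactly the definition of an $N$-scaling function. The candidate I would use is the simplest possible one: set $H^0(\w)=\widehat{\varphi}(N\w)$ for $|\w|\le\pi$ and extend it $2\pi$-periodically.

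The crux of the argument --- and the single place the hypothesis $N>2$ is used --- is a compression estimate on supports. Because $\widehat{\varphi}$ is supported in $[-\frac{4\pi}{3},\frac{4\pi}{3}]$ and is identically $1$ on $[-\frac{2\pi}{3},\frac{2\pi}{3}]$, while $N>2$ forces the strict inequality $\frac{4\pi}{3N}<\frac{2\pi}{3}$, the rescaled factor $\widehat{\varphi}(\w/N)$ equals $1$ throughout the entire support $|\w|\le\frac{4\pi}{3}$ of $\widehat{\varphi}(\w)$. On that support the target identity therefore reduces to $\widehat{\varphi}(\w)=H^0(\w/N)$, which holds by the very definition of $H^0$ (note $|\w/N|\le\frac{4\pi}{3N}<\pi$, so no periodization is involved there). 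This same strict inequality is what fails at $N=2$, where the compressed support reaches exactly to the edge $\frac{2\pi}{3}$ of the plateau; that borderline case is the reason the theorem is stated for $N>2$.

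The step I expect to require the most care is checking the complementary region $|\w|>\frac{4\pi}{3}$, where $\widehat{\varphi}(\w)=0$ and one must confirm that the product $H^0(\w/N)\widehat{\varphi}(\w/N)$ also vanishes. Since $\widehat{\varphi}$ has support of width $\frac{8\pi}{3}>2\pi$, the argument $\w/N$ can exceed $\pi$ in modulus while $\widehat{\varphi}(\w/N)$ is still nonzero, so here the $2\pi$-periodicity of $H^0$ genuinely intervenes and must be tracked carefully. I would handle this by first noting that $H^0$, restricted to one period, is supported in $[-\frac{4\pi}{3N},\frac{4\pi}{3N}]\subset(-\pi,\pi)$ and vanishes near $\pm\pi$, so its periodic extension is well defined and continuous; then a short case analysis shows that on each interval where $\widehat{\varphi}(\w/N)\ne 0$ the periodized $H^0(\w/N)$ is forced to be zero (again because $N>2$ keeps the nonzero plateau of $\widehat{\varphi}(\w/N)$ and the translated bumps of $H^0$ near $\pm 2\pi$ from overlapping, the gap being precisely $\frac{2\pi}{3}-\frac{4\pi}{3N}>0$). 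Assembling the two regions gives (\ref{eq-3}) for all $\w$, and since $\widehat{\varphi}(N\w)$ is real and even the resulting coefficients $\{h_n^0\}$ are real, completing the verification that $\varphi$ is an $N$-scaling function.
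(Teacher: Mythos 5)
Your proposal is correct and takes essentially the same route as the paper: both define $H^0(\w)=\widehat{\varphi}(N\w)$ on $[-\pi,\pi]$, extend $2\pi$-periodically, and use the key fact that $\widehat{\varphi}(\w/N)=1$ on the support $[-\frac{4\pi}{3},\frac{4\pi}{3}]$ of $\widehat{\varphi}$, which is exactly formula (\ref{eq-9}) and the surrounding argument. Your only addition is the explicit check that the periodized $H^0(\w/N)$ does not collide with $\widehat{\varphi}(\w/N)$ outside that support (the gap $\frac{2\pi}{3}-\frac{4\pi}{3N}>0$), a verification the paper leaves implicit; note, though, that your side remark about $N=2$ is slightly off, since at the borderline the bumps only touch where $\widehat{\varphi}$ already vanishes, so the same construction in fact recovers the classical Meyer filter there.
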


\begin{proof}
It suffices to show that there exists a frequency function $H^0(\w)$ for which the scaling relation in the form of Fourier transforms is satisfied: $\widehat{\varphi}(\w) = H^0\left(\frac{\w}{N}\right)\widehat{\varphi}\left(\frac{\w}{N}\right)$.	
Due to $2\pi$-periodicity of function $H^0(\w)$, it suffices to define the $H^0(\w)$ on the interval $[-\pi, \pi]$.
Since the function $\widehat{\varphi}(\w)$ vanishes outside the interval $[-4\pi/3, 4\pi/3]$ and $\widehat{\varphi}\left(\frac{\w}{N}\right)=1$ on the interval $[-2\pi N/3,2\pi N/3]$, which includes $[-4\pi/3, 4\pi/3]$, then $H^0\left(\frac{\w}{N}\right) =\widehat{\varphi}(\w)$ on $[-4\pi/3, 4\pi/3]$, so $H^0(\w) =\widehat{\varphi}(N\w)$ on the interval $[-4\pi/3N, 4\pi/3N]$.
Thus, we obtain the required function $H^0(\w)$:
\begin{equation}\label{eq-9}
H^0(\w)=
\left\{
  \begin{matrix}
    1, &\w\in [-\frac{2\pi}{3N},\frac{2\pi}{3N}], \\
    \cos\left(\frac \pi2 \nu\left(\frac{3N}{2\pi}|\w|-1\right)\right), & \frac{2\pi}{3N} \le|\w|\le \frac{4\pi}{3N}, \\
    0, & \rm{for\ others}\ \w\in[-\pi,\pi] ,\\
  \end{matrix}
\right .
\end{equation}
Outside the interval $[-\pi,\pi]$ the $H^0(\omega)$ continues as $2\pi$-periodic function.
The coefficients $\{h_n^0\}$ of the $N$-scaling relation (\ref{eq-1}) are found from the Fourier expansion of the function $H^0(\w)$.
\end{proof}

\subsection{Construction of frequency functions and wavelets}
To construct the Meyer wavelets, we first find their frequency functions $H^k(\w)$ taking into account the unitarity of matrix (\ref{eq-7}).
Due to $2\pi$-periodicity of $H^k(\w)$, it suffices to define the function $H^k(\w)$ on the interval $[-\pi,\pi]$.
We first define $H^k(\w)$ on $[0,\pi]$, and then we continue on $[-\pi,0]$ as an even or odd function according to $k$.

We take a function equal to one on the interval $[k\pi/N, (k+1)\pi/N]$ and equal to zero outside this interval.
Then we smooth it near the ends $k\pi/N$ and $(k+1)\pi/N$, namely, on the intervals $k\pi/N -\pi/3N \le \w < k\pi/N+\pi/3N$ and $(k+1)\pi/N -\pi/3N\le \w <(k+1)\pi/N +\pi/3N$.
Then the following expressions for the smoothed functions $H^k_+(\w)$, $k = 1, 2, \dots, N-2$, are obtained for $\w \ge 0$:
\begin{equation}\label{eq-10}
H^k_+(\w)=
\left\{
  \begin{matrix}
    0, & \w < \frac{k\pi}{N} -\frac{\pi}{3N}\ {\rm or}\ \w\ge \frac{(k+1)\pi}{N} +\frac{\pi}{3N}, \\
    \sin\left(\frac \pi2 \nu\left(\frac{3N}{2\pi} \w -\frac{3k-1}{2}\right)\right), & \frac{k\pi}{N} -\frac{\pi}{3N} \le \w < \frac{k\pi}{N} +\frac{\pi}{3N}, \\
    1, & \frac{k\pi}{N} +\frac{\pi}{3N} \le \w < \frac{(k+1)\pi}{N} -\frac{\pi}{3N}, \\
    \cos\left(\frac \pi2 \nu\left(\frac{3N}{2\pi} \w -\frac{3(k+1)-1}{2}\right)\right), & \frac{(k+1)\pi}{N} -\frac{\pi}{3N} \le \w < \frac{(k+1)\pi}{N} +\frac{\pi}{3N}, \\
  \end{matrix}
\right .
\end{equation}

The last frequency function $H^{N-1}_+(\w)$ on $[0,\pi]$ is given by:
\begin{equation}\label{eq-11}
H^{N-1}_+(\w)=
\left\{
  \begin{matrix}
    \sin\left(\frac \pi2 \nu\left(\frac{3N}{2\pi} \w -\frac{3(N-1)-1}{2}\right)\right), & \frac{(N-1)\pi}{N} -\frac{\pi}{3N} \le \w < \frac{(N-1)\pi}{N} +\frac{\pi}{3N}, \\
    1, & \frac{(N-1)\pi}{N} +\frac{\pi}{3N} \le \w \le \pi, \\
    0, & \rm{for\ others}\ \w\in[0,\pi], \\
  \end{matrix}
\right .
\end{equation}

On the whole interval $[-\pi,\pi]$ of the function $H^k_+(\w)$ one can extend both an even and an odd function.
Next, we specify how to do this from the requirement that matrix (\ref{eq-7}) be unitary.
Outside the interval $[-\pi,\pi]$, the functions $H^k(\w)$ continue as $2\pi$-periodic functions.

Let us show that the above functions $H^0(\w)$, $H^k(\w)$, and (11) have the property that the rows of matrix (\ref{eq-7}) have unit norm.

The equality $|H^0(\w)|^2+ |H^0\left(\w-2\pi/N\right)|^2+\dots+ |H^0\left(\w-2\pi(N-1)/N\right)|^2=1$ is sufficient to show only on the interval $[\pi/N-\pi/3N,\pi/N+\pi/3N]$.
This follows from the fact that the smoothing of all functions at the discontinuity points is carried out equally and symmetrically.
In addition, in the reduced sum, the carriers of neighboring functions intersect exactly along the intervals where smoothing was performed. On the interval $[\pi/N-\pi/3N,\pi/N+\pi/3N]$ there are only two nonzero terms in the sum, $H^0(\w)$ and $H^0\left(\w-2\pi/N\right)$.
Then we obtain using the equality $\nu\left(1-x\right)=1-\nu\left(x\right)$:
$$
|H^0(\omega)|^2+|H^0(\omega-2\pi/N)|^2=
$$
$$
=\cos^2\left(\frac \pi2 \nu\left(\frac{3N}{2\pi} \w -1\right)\right)+
\cos^2\left(\frac \pi2 \nu\left(\frac{3N}{2\pi} \left|\w -\frac{2\pi}{N}\right|\right)\right)=
$$
$$
=\cos^2\left(\frac \pi2 \nu\left(\frac{3N}{2\pi} \w -1\right)\right)+
\cos^2\left(\frac \pi2 \nu\left(1-\left(\frac{3N}{2\pi}\w -1\right)\right)\right)=
$$
$$
=\cos^2\left(\frac \pi2 \nu\left(\frac{3N}{2\pi} \w -1\right)\right)+
\sin^2\left(\frac \pi2 \nu\left(\frac{3N}{2\pi}\w -1\right)\right)=1.
$$

Quite similarly, the normalization of the remaining rows for the functions $H^k(\w)$ is shown, both for the even continuation of $H^k_+(\w)$ and for the odd one.

Let us ensure the Hermitian orthogonality of the rows of the matrix (\ref{eq-7}).
Since the support of the function $H^k(\w)$ has a non-empty intersection with the supports of only the adjacent functions $H^{k-1}(\w)$ and $H^{k+1}(\w)$, it is sufficient to ensure that only adjacent rows are orthogonal.
To do this, it suffices to consider frequency functions $H^k(\w)$ with even numbers $k$ as even functions, and with odd $k$ consider odd functions $H^k(\w)$ on the interval $[-\pi,\pi]$ based on the functions $H^k_+(\w)$ defined on the interval $[0,\pi]$.

Thus, the desired frequency functions of wavelets $\psi^1(x), \dots , \psi^{N-1}(x)$ have the form: for even values of $k$, the function $H^k(\w)$ is even and on the interval $[0,\pi]$ is defined by formula (\ref{eq-10}), and for odd values of $k$ the function $H^k(\w)$ is odd and on the interval $[0,\pi]$ is defined by formula (\ref{eq-10}). Accordingly, the last frequency function $H^{N-1}(\w)$ is determined depending on the parity of the number $N-1$.

The scaling filter $\{h_n^0\}$ and the wavelet filters $\{h_n^k\}$, $k=1,2,\dots, N-1$, are found by Fourier series expansion of the frequency functions $H^0(\w)$ and $H^k(\w)$.

Wavelets $\psi^k(x)$ are determined by their filters $\{h_n^k\}$ by the formula (\ref{eq-4}). Since the Fourier transforms of the wavelets $\widehat{\varphi}(\w)$ and $\widehat{\psi^k}(\w)$ have compact supports, the functions $\varphi(x)$ and $\psi^1(x), \dots , \psi^{N-1}(x)$ are infinitely differentiable.

\section{Multiple scaling factors}
Note that in the case of an even $N>2$, the last frequency function $H^{N-1}(\w)$ defined above at the points $(2n+1)\pi$ has a discontinuity: to the left of $(2n+1)\pi$ it takes the value $1$, and to the right it takes the value $-1$.
As a result, the filter $\{h_n^{N-1}\}$ has an infinite impulse response and a weak decrease at infinity, which makes it difficult to use such wavelets in the case of an even $N>2$.
This defect can be overcome by using multiple scale factors and classical Meyer wavelets with a scale factor of 2.

Let $\varphi_H(x)$ and $\psi_H^1(x), \psi_H^2(x),\dots, \psi_H^{N-1}(x)$ be the scaling function and wavelets with a scaling factor $N$.
The corresponding filters and scaling relations in the frequency area:
$$
H^0(\omega)= \frac {1}{\sqrt{N}}\sum_{n\in \Z} h_n^0\ e^{-in\omega}, \quad H^k(\omega)= \frac {1}{\sqrt{N}}\sum_{n\in \Z}h_n^k\ e^{-in\omega}, \quad k =1,2,\dots ,N-1 .
$$
$$
\widehat{\varphi}_H(\omega) = H^0\left(\frac{\omega}{N}\right)\widehat{\varphi}_H\left(\frac{\omega}{N}\right),\quad \widehat{\psi_H^k}(\omega)=H^k\left(\frac{\omega}{N}\right)\widehat{\varphi}_H\left(\frac{\omega}{N}\right), \quad k=1,2,\ldots,N-1.
$$

Let $\varphi_G(x)$ and $\psi_G^1(x), \psi_G^2(x),\dots, \psi_G^{M-1}(x)$ be the scaling function and wavelets with a scaling factor $M$.
The corresponding filters and scaling relations in the frequency area:
$$
G^0(\omega)= \frac {1}{\sqrt{M}}\sum_{n\in \Z} g_n^0\ e^{-in\omega}, \quad G^l(\omega)= \frac {1}{\sqrt{M}}\sum_{n\in \Z}g_n^l\ e^{-in\omega}, \quad l =1,2,\dots ,M-1 .
$$
$$
\widehat{\varphi}_H(\omega) = G^0\left(\frac{\omega}{M}\right)\widehat{\varphi}_G\left(\frac{\omega}{M}\right),\quad \widehat{\psi_G^l}(\omega)=G^l\left(\frac{\omega}{M}\right)\widehat{\varphi}_M\left(\frac{\omega}{M}\right), \quad l=1,2,\ldots,M-1.
$$

\subsection{Wavelet decomposition with coefficient N}
Consider the question of how to obtain the scaling function and wavelets $\psi^{kl}(x)$, $k=0,1,2,\dots,N-1$, $l=0,1,2,\dots,M-1$ with the scaling factor $MN$ from the given $N$- and $M$-wavelets.
It is convenient to solve the problem at the level of formal power series \cite{Sm-14}.
Let $X(z)=\sum_n\, x_n z^n$ be the power series corresponding to the signal $\{x_n\}$, $z\in \CC$.
Let's first make a wavelet decomposition of the signal using wavelets with a scaling factor $N$, and then, to what happens, apply the wavelet decomposition with a factor $M$.

At the level of power series, wavelet decomposition (taking into account $N$-decimation) is performed according to the following scheme \cite{Sm-14}:
$$
X\to\{A^0,A^1,A^2,\dots,A^{N-1}\},
$$
where the components $A^k(\w)$ are also power series and are determined from the formulas:
\begin{equation}\label{eq-12}
A^k(\w)=A^k\left(z^N\right)=
\frac{1}{N}\sum_{s=0}^{N-1}{H^k\left(\rho^sz\right)X\left(\rho^sz\right)},\quad k=0,1,\ldots,N-1,			 \end{equation}
where $\rho=e^{-\frac{i2\pi}{N}}$ and $w=z^N$.
The power series coefficients of the $A^k(\w)$ are the coefficients $\{a_m^k\}$ of the wavelet expansion of the signal $\{x_n\}$.

\subsection{Wavelet decomposition with coefficient M}
Now, for each $A^k(w)$, we apply a wavelet decomposition with a scaling factor $M$.
At the level of power series, this wavelet decomposition is performed according to the following scheme:
$$
A^k(w)\to\{A^{k0}(u), A^{k1}(u),\dots, A^{k,M-1}(u)\},\quad k=0,1,2,\dots, N-1,
$$
where the components $A^{kl}(u)=A^{kl}(w^M)$ are determined from the formulas:
$$
A^{kl}\left(u\right)=A^{kl}\left(w^M\right)=\frac{1}{M}\sum_{p=0}^{M-1}{G^l\left(\tau^pw\right)A^k \left(\tau^pw\right)},\quad l=0,1,\ldots,M-1,
$$
where $\tau=e^{-i2\pi/M}$ and $u=w^N$.
As a result, we obtain a decomposition of the signal $X(z)$ into a matrix of formal power series $A^{kl}(u)$:
$$
X(z)\rightarrow\{A^0(w),A^1(w),\dots,A^{N-1}(w)\}\rightarrow
\left(\begin{matrix}
A^{00}(u)&A^{10}(u)&\dots&A^{N-1,0}(u)\\
A^{01}(u)&A^{11}(u)&\dots&A^{N-1,1}(u)\\
\dots&\dots&\dots&\dots\\
A^{0,M-1}(u)&A^{1,M-1}(u)&\dots&A^{N-1,M-1}(u)\\
\end{matrix}\right).
$$

To find $A^{kl}(u)$ we need expressions of the form $A^k(\tau^p w)$. If $w=z^N$, then $\tau^p w=\left(\tau^{p/N} z\right)^N$.
Then
$$
A^k\left(\tau^pw\right)=A^k\left(\tau^pz^n\right)= A^k\left(\left(\tau^\frac{p}{N}z\right)^N\right)= \frac{1}{N}\sum_{s=0}^{N-1}{H^k\left(\rho^s\tau^\frac{p}{N}z\right)X\left(\rho^s\tau^\frac{p}{N}z\right)}, \ k=0,1,\ldots,N-1.
$$

Let us calculate $A^{kl}(w^M)$. Let $\sigma=e^{-i2\pi/MN}$, then $\tau=\sigma^N$ and $\rho=\sigma^M$. Then:
$$
A^{kl}\left(w^M\right) =\frac{1}{M}\sum_{p=0}^{M-1}{G^l\left(\tau^pw\right)A^k\left(\tau^pw\right)} =\frac{1}{M}\frac{1}{N}\sum_{p=0}^{M-1}{G^l\left(\tau^pw\right)\sum_{s=0}^{N-1} {H^k\left(\rho^s\tau^\frac{p}{N}z\right)X\left(\rho^s\tau^\frac{p}{N}z\right)}}=
$$
$$
=\frac{1}{MN}\sum_{p=0}^{M-1}{G^l\left(\left(\tau^\frac{p}{N}z\right)^N\right)\sum_{s=0}^{N-1} {H^k\left(\rho^s\sigma^pz\right)X\left(\rho^s\sigma^pz\right)}} =
$$
$$
=\frac{1}{MN}\sum_{p=0}^{M-1}{G^l\left(\sigma^{pN}z^N\right)\,\sum_{s=0}^{N-1} {H^k\left(\sigma^{Ms+p}z\right)X\left(\sigma^{Ms+p}z\right)}}=
$$
\centerline{Denote $Ms+p=q$, $q=0,1,\dots,MN-1$. Then $p=q-Ms$. We get:}
$$
G^l(\sigma^{pN}z^N)=G^l(\sigma^{(q-Ms)N}z^N)=G^l(\sigma^{qN-MNs}z^N) =[\sigma^{MN}=1]=G^l(\sigma^{qN}z^N).
$$
We continue the calculations:
$$
=\frac{1}{MN}\sum_{q=0}^{MN-1}{G^l\left(\sigma^{qN}z^N\right)H^k\left(\sigma^qz\right) X\left(\sigma^qz\right)}= A^{kl}\left(w^M\right)
$$

We get the final formula:
$$
A^{kl}\left(w^M\right) =\frac{1}{MN}\sum_{q=0}^{MN-1}{G^l\left((\sigma^qz)^N\right)H^k\left(\sigma^qz\right) X\left(\sigma^qz\right)},
$$
where $\sigma=e^{-i2\pi/MN}$.

Now, as the frequency functions of the wavelets $\psi^{kl}(x)$, $k=0,1,\dots,N-1$, $l=0,1,\dots,M-1$, we take the functions
$$
H^{kl}(z)=G^l(z^N)H^k(z).
$$
Then,
$$
A^{kl}\left(z^{MN}\right)=\frac{1}{MN}\sum_{q=0}^{MN-1}{H^{kl}\left(\sigma^qz\right) X\left(\sigma^qz\right)}
$$
which fully corresponds to the general scheme (\ref{eq-12}) of the wavelet decomposition of the signal at the level of formal power series with the scaling factor $MN$.

We consider that $z=e^{-i\w}$ and we set $H^{kl}(\w)=H^{kl}(e^{-i\w})$. Thus, we have obtained the following wavelet frequency functions:
$$
H^{kl}(\w)=G^l(N\w)H^k(\w).
$$

Let us find the scaling filters of wavelets of the $\psi^{kl}(x)$.
$$
G^l\left(N\omega\right)H^k\left(\omega\right)=\frac{1}{\sqrt M}\sum_{m\in \Z}{g_m^le^{-iNm\omega}}\frac{1}{\sqrt N}\sum_{p\in \Z}{h_p^ke^{-ip\omega}}=\frac{1}{\sqrt{MN}}\sum_{p\in \Z}{\sum_{m\in \Z}{g_m^lh_p^ke^{-i\left(Nm+p\right)\omega}}=}
$$
\centerline{[denote $n=p+mN$, $p=n-mN$]}
$$
=\frac{1}{\sqrt{MN}}\sum_{n\in \Z}{\sum_{m\in \Z}{g_m^lh_{n-mN}^ke^{-in\omega}}=}\frac{1}{\sqrt{MN}}\sum_{n\in \Z}{h_n^{kl}e^{-in\omega}=}H^{kl}\left(\omega\right)
$$

Thus, we obtain the scaling filters of the wavelets $\psi^{kl}(x)$:
$$
h_n^{kl}=\sum_{m\in \Z}{g_m^lh_{n-mN}^k},\quad k=0,1,\ldots,N-1,\quad l=0,1,\ldots,M-1 .
$$

\begin{theorem} \label{Th2}
If $\varphi_H(x)$ and $\varphi_G(x)$ are $N$- and $M$-Meyer scaling functions $\varphi(x)$ with frequency functions $H^0(\w)$ and $G^0(\w)$, respectively, then the scaling function $\psi^{00}(x)$ corresponding to the scaling filter $H^{00}(\w)=G^0(N\w)H^0(\w)$, is also the Meyer function $\varphi(x)$.
\end{theorem}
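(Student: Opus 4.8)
The plan is to verify directly that the Meyer function $\widehat{\varphi}(\w)$ satisfies the $MN$-scaling relation associated with the filter $H^{00}(\w)=G^0(N\w)H^0(\w)$, and then to identify $\psi^{00}$ with $\varphi$ by uniqueness. By the scaling relation (\ref{eq-3}) taken with scaling factor $MN$ in place of $N$, the function generated by $H^{00}$ is characterized in the frequency domain by $\widehat{\psi^{00}}(\w)=H^{00}(\w/MN)\,\widehat{\psi^{00}}(\w/MN)$, so it suffices to show that $\widehat{\varphi}$ obeys this same equation. First I would unwind the dilation in the filter: substituting $\w\mapsto\w/(MN)$ into $H^{00}(\w)=G^0(N\w)H^0(\w)$ and cancelling one factor of $N$ inside $G^0$ gives
$$H^{00}\left(\frac{\w}{MN}\right)=G^0\left(\frac{\w}{M}\right)H^0\left(\frac{\w}{MN}\right),$$
so the identity to be proved is $\widehat{\varphi}(\w)=G^0(\w/M)\,H^0(\w/MN)\,\widehat{\varphi}(\w/MN)$.

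The core step is to obtain this by composing the two scaling relations that $\varphi$ already satisfies by hypothesis. As the $M$-Meyer scaling function, $\varphi$ obeys $\widehat{\varphi}(\w)=G^0(\w/M)\,\widehat{\varphi}(\w/M)$, and as the $N$-Meyer scaling function (Theorem~\ref{Th1}) it obeys $\widehat{\varphi}(\eta)=H^0(\eta/N)\,\widehat{\varphi}(\eta/N)$ for every argument $\eta$. Applying the latter at $\eta=\w/M$ yields $\widehat{\varphi}(\w/M)=H^0(\w/MN)\,\widehat{\varphi}(\w/MN)$, and substituting this into the former reproduces exactly the target identity. Hence $\widehat{\varphi}$ solves the $MN$-scaling equation built from $H^{00}$.

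To finish I would invoke uniqueness. The Meyer function has a compactly supported Fourier transform normalized by $\widehat{\varphi}(0)=1$, which is consistent with $H^{00}(0)=G^0(0)H^0(0)=1$, and such a solution of the scaling relation is unique; therefore the scaling function $\psi^{00}$ determined by $H^{00}$ coincides with $\varphi$. I expect the only genuine subtlety to lie in this last identification rather than in the algebra, which is essentially a one-line substitution: one must argue that the scaling relation, together with the normalization and compact-support conditions, pins down $\varphi$ as the unique solution, so that exhibiting $\widehat{\varphi}$ as \emph{a} solution suffices to conclude $\psi^{00}(x)=\varphi(x)$.
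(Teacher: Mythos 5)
Your proof is correct, and its core is the same argument as the paper's: compose the $M$-scaling relation with the $N$-scaling relation to show that the Meyer function $\varphi$ satisfies the $MN$-refinement equation whose frequency function is $H^{00}(\w)=G^0(N\w)H^0(\w)$. The only real difference is where the composition is carried out: the paper works in the time domain, writing $\psi^{00}(x)=\sqrt{MN}\sum_{n}\sum_{m}g_m^0h_{n-mN}^0\varphi(MNx-n)$, re-indexing with $k=n-mN$, and collapsing the inner sum by the $N$-scaling relation and the outer sum by the $M$-scaling relation, while you perform the identical composition on the Fourier side; in fact the paper records your exact chain of equalities, $\widehat{\psi^{00}}(\w)=G^0\left(\frac{\w}{M}\right)H^0\left(\frac{\w}{MN}\right)\widehat{\varphi}\left(\frac{\w}{MN}\right)=G^0\left(\frac{\w}{M}\right)\widehat{\varphi}\left(\frac{\w}{M}\right)=\widehat{\varphi}(\w)$, as a remark immediately following its proof. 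Where you go beyond the paper is the final identification: the paper stops once $\varphi$ is shown to satisfy the scaling relation with filter $\{h_n^{00}\}$, implicitly taking that as the meaning of ``$\psi^{00}$ is the Meyer function,'' whereas you correctly note that to conclude $\psi^{00}=\varphi$ one should invoke uniqueness of the solution of the refinement equation that is continuous at $0$ and normalized by $\widehat{\varphi}(0)=1$ (obtained by iterating to the infinite product, using $H^{00}(0)=1$). You only sketch that uniqueness claim, but since it is a standard fact about refinement equations, this is a point of extra care rather than a gap.
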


\begin{proof}
It suffices to show that the Meyer function $\varphi(x)$ satisfies the scaling relation with coefficients $h_n^{00}=\sum_{m\in \Z}{g_m^0h_{n-mN}^0}$:
$$
\psi^{00}\left(x\right)=\sqrt{NM}\sum_{n\in \Z}\sum_{m\in \Z}{g_m^0h_{n-mN}^0\varphi\left(MNx-n\right)}=
$$
\centerline{[denote $k = n-mN, n=k+mN$]}
$$
=\sqrt{NM}\sum_{k\in \Z}\sum_{m\in \Z}{g_m^0h_k^0\varphi\left(MNx-k-mN\right)}=
\sqrt{NM}\sum_{m\in \Z}g_m^0 \sum_{k\in \Z}{h_k^0 \varphi\left(N\left(Mx-m\right)-k\right)}=
$$
$$
=\sqrt M\sum_{m\in \Z}{g_m^0\varphi\left(Mx-m\right)}=\varphi\left(x\right)
$$
\end{proof}

In addition, we have:
$$
\widehat{\psi^{00}}\left(\omega\right)= H^{00}\left(\frac{\omega}{MN}\right)\hat{\varphi}\left(\frac{\omega}{MN}\right) =G^0\left(\frac{\omega}{M}\right)H^0\left(\frac{\omega}{MN}\right) \hat{\varphi}\left(\frac{\omega}{MN}\right)=G^0\left(\frac{\omega}{M}\right) \hat{\varphi}\left(\frac{\omega}{M}\right)=\hat{\varphi}\left(\omega\right)
$$

It follows from the last theorem that the wavelets $\widehat{\psi^{kl}}(\omega)$ can be found from the relations
$$
\widehat{\psi^{kl}}(\omega)=H^{kl}\left(\frac{\omega}{MN}\right)\hat{\varphi} \left(\frac{\omega}{MN}\right) =G^l\left(\frac{\omega}{M}\right)H^k\left(\frac{\omega}{MN}\right) \widehat{\varphi}\left(\frac{\omega}{MN}\right)
$$
where the function $\widehat{\varphi}(\w)$ is defined by formula (\ref{eq-8}), and the frequency functions $G^l(\w)$ and $H^k(\w)$ are assumed to be known.

\subsection{Example. Meyer wavelets with scaling factor 6}
Consider the construction of Meyer wavelets with a multiple scaling factor $MN$ when $M=3$ and $N=2$. The scaling function will be the same $\varphi(x)$ indicated in formula (\ref{eq-8}).
We recall the scheme for constructing Meyer wavelets for $N=2$ \cite{Daub}, \cite{Sm-14}.

Let $H^0\left(\omega\right)=\frac{1}{\sqrt 2}\sum_{n\in Z}{h_n^0e^{-in\omega}}$ be the frequency function of the Meyer wavelet $\varphi(x)$ for $N=2$,
$$
H^0(\w)=
\left\{
  \begin{matrix}
    1, &\w\in [-\frac{2\pi}{6},\frac{2\pi}{6}], \\
    \cos\left(\frac \pi2 \nu\left(\frac{6}{2\pi}|\w|-1\right)\right), & \frac{2\pi}{6} \le|\w|\le \frac{4\pi}{6}, \\
    0, & \rm{for\ others}\ \w\in[-\pi,\pi] ,\\
  \end{matrix}
\right .
$$

Then the wavelet $\psi(x)$ is found from the formula $\widehat{\psi}\left(\omega\right)= e^\frac{i\omega}{2}\overline{H^0\left(\frac{\omega}{2}+ \pi\right)}\widehat{\varphi}\left(\frac{\omega}{2}\right).$
The corresponding frequency function has the view $H^1\left(\omega\right)=e^{i\omega}\overline{H^0 \left(\omega+\pi\right)}$ and its filter $\{h_n^1\}$ is found by the formula $h_n^1=(-1)^{n+1}h_{-n-1}^0$.

In the case of $M=3$, we take the same scaling function $\varphi(x)$ with the scaling relation $\varphi(x)=\sqrt3\sum_{n\in \Z}{g_n^0\varphi(3x-n)}$, where the scaling filter $\{g_n^0\}$ is found from the Fourier series expansion of the function $G^0(\w)$ defined by formula (\ref{eq-9}) for $N=3$:
$$
G^0(\w)=
\left\{
  \begin{matrix}
    1, &\w\in [-\frac{2\pi}{9},\frac{2\pi}{9}], \\
    \cos\left(\frac \pi2 \nu\left(\frac{9}{2\pi}|\w|-1\right)\right), & \frac{2\pi}{9} \le|\w|\le \frac{4\pi}{9}, \\
    0, & \rm{for\ others}\ \w\in[-\pi,\pi] ,\\
  \end{matrix}
\right .
$$

Wavelet filters $\{g_n^1\}$, $\{g_n^2\}$ are found by expanding the frequency functions $H^1(\w)$ and $H^2(\w)$ defined by formulas (\ref{eq-10}) and (\ref{eq-11}) into a Fourier series.

Following the scheme outlined in the previous paragraph, we define the frequency functions of the wavelets $\psi^{kl}(x)$, $k=0,1$,\, $l=0,1,2$ by the formula $H^{kl}(\w)=G^l(2\w)H^k(\w)$. Here $\psi^{00}(x)=\varphi(x)$ is the Meyer scaling function.
Thus, we obtain a matrix of wavelet frequency functions $\psi^{kl}(x)$ with a scaling factor of 6
$$
G\left(\omega\right)=
\left(\begin{matrix}H^{00}\left(\omega\right)&H^{01}\left(\omega\right)&H^{02 }\left(\omega\right)\\H^{10}\left(\omega\right)&H^{11}\left(\omega\right)&H^{12}\left(\omega\right)\\
\end{matrix}\right)\ =\left(\begin{matrix}G^0\left(2\omega\right)H^0\left(\omega\right)&G^1\left(2\omega\right)H^0 \left(\omega\right)&G^2\left(2\omega\right)H^0\left(\omega\right)\\
G^0\left(2\omega\right) H^1\left(\omega\right)& G^1\left(2\omega\right)H^1\left(\omega\right)&G^2\left(2\omega\right)H^1 \left(\omega\right)\\
\end{matrix}\right)
$$
and wavelet filters:
$$
h_n^{kl}=\sum_{m\in Z}{g_m^lh_{n-2m}^k},\quad k=0,1,\quad l=0,1,2.
$$

Let us present graphs of frequency functions $H^{kl}(\w)= G^l(N\w)H^k(\w)$ (Fig.1 and Fig.2).

\begin{figure}[h]
\center{\includegraphics[width=0.6\linewidth]{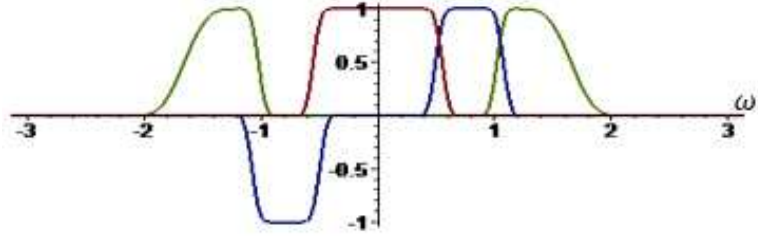}}
\caption{Plots of frequency functions $H^{00}(\w)$ (red), $H^{01}(\w)$ (blue) and $H^{02}(\w)$ (green).}
\label{Fig_1}
\end{figure}

\begin{figure}[h]
\center{\includegraphics[width=0.6\linewidth]{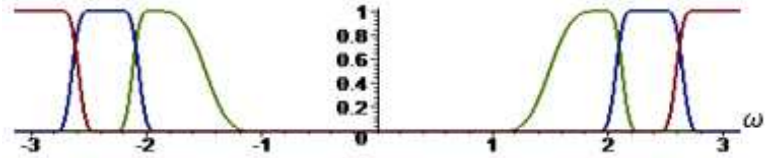}}
\caption{Plots of moduli of frequency functions $H^{10}(\w)$ (red), $H^{11}(\w)$ (blue) and $H^{12}(\w)$ (green).}
\label{Fig_2}
\end{figure}

\end{document}